\newcommand{\N}{{\mathbb{N}}}
\begin{document}

\title*{Sums of two squares and a power}
\author{Rainer Dietmann and Christian Elsholtz}
\institute{
Rainer Dietmann\\
Department of Mathematics,
Royal Holloway, University of London,
Egham, TW20 0EX, UK\\
\email{Rainer.Dietmann@rhul.ac.uk}\\
Christian Elsholtz \at
Institut f\"ur Mathematik und Zahlentheorie, Technische Universit\"at Graz,
Kopernikusgasse 24/II, A-8010 Graz, Austria\\
\email{elsholtz@math.tugraz.at}
}
%
%

\maketitle

\begin{center}
Dedicated to the memory of Wolfgang Schwarz,\\
with admiration\\ for his broad interests, inside and outside mathematics.
\end{center}

\abstract{
We extend results of Jagy and Kaplansky and the present authors
and show that for all $k\geq 3$ there are infinitely many
positive integers $n$, which cannot be written as
$x^2+y^2+z^k=n$ for positive integers $x,y,z$,
where for $k\not\equiv 0 \bmod 4$ a congruence condition is 
imposed on $z$. These examples are of interest as there is no
congruence obstruction itself for the representation of these $n$.
This way we provide a new family of counterexamples to the Hasse 
principle or strong approximation.
}

\section{Introduction}

This paper is dedicated to the memory of Wolfgang Schwarz, who was the PhD
advisor of the second named author. In particular Wolfgang Schwarz's 
books ``Einf\"uhrung in Siebmethoden der analytischen Zahlentheorie'' 
and ``Arithmetical functions'' 
were very useful for the second author's own studies.

Looking at Schwarz's own PhD thesis, see \cite{schwarz1,schwarz2}, 
which is on sums of prime powers,
i.e. on the Goldbach-Waring problem,
 one finds a great number of results, one of those being 
the following
(Theorem 3 of \cite{schwarz2}):
For fixed $k \ge 1$ let $S_k(N)$ be the set of positive integers $n$,
with
\[
\begin{array}{ll}
  3 \leq n \leq N, &\\
  n \not\equiv 0 \bmod 2, \;  n \not\equiv 2 \bmod 3, & \mbox{for odd $k$}\\
  n \equiv 3 \bmod 24, & \mbox{for even $k$},\\
  n \not\equiv 0 \bmod 5, & \mbox{for $k \equiv 2 \bmod 4$}\\
  n \not\equiv 0,2 \bmod 5, & \mbox{for $k \equiv 0 \bmod 4$}\\
  n \not\equiv 1 \bmod p, & \mbox{for each $p \equiv 3 \bmod 4$ with $(p-1)\mid k$.}
\end{array}
\]

Then the number of integers $n\in S_k(N)$ not of the form
\[n=p_1^2 + p_2^2+p_3^k,\] 
is, for all $B>0$, at most
\[O_B\left(\frac{N}{(\log N)^B}\right).\]
This improved on a result of Hua 
\cite[Theorem 1]{Hua:1938}, who proved this with $B=\frac{k}{k+2}$.
As we had worked earlier on solutions of $x^2+y^2+z^k=n$, it is due to this 
connection that we have chosen to contribute the present note
to the volume in Memory of Wolfgang Schwarz.

As it turns out, also one of the first named author's PhD advisors worked on this kind of problem
in his PhD thesis:
without restricting the variables to primes, one should be able to obtain stronger
results, and indeed, improving on earlier work pioneered by
Davenport and Heilbronn \cite{DH} and further developed by many other authors,
Br\"udern \cite{B} has shown that there are at most
$O(N^{1-\frac{1}{k}+\epsilon})$ integers $n \leq N$ with no 
solutions of
\begin{equation}
\label{eq}
   n=x^2+y^2+z^k,
\end{equation}
where $n$ is not in a residue class excluded by congruence obstructions.
For a survey of results on sums of mixed powers see also 
\cite{BK} and \cite{VW}.

It was generally expected that for all sufficiently large $n$
the Hasse principle for equation (\ref{eq}) holds true, i.e. for all such $n$
satisfying the necessary congruence conditions there would exist
a solution of (\ref{eq}) in positive integers, see, for
example, chapter 8 in \cite{V}.
However, in 1995
Jagy and Kaplansky \cite{JK} shattered this belief
by proving that for $k=9$ and some positive constant $c$
there are at least $c \frac{N^{1/3}}{\log N}$ positive integers
$n \le N$ that are not sums of two squares and one $k$-th power.
In fact, their method works for any odd 
composite number $k$,
but not for the other cases of $k$.
In \cite{DietmannandElsholtz:2008}
we proved that a similar restriction holds for $k=4$.
That approach actually generalizes to all $k$ divisible by four
(see Theorem \ref{biquadrate}),
and by slightly modifying it we can not only get a bigger
set of exceptional $n$ but we can also handle $k$ not divisible by
four; to be more specific, we prove that
(\ref{eq}) does not satisfy 'strong approximation':
For $k \equiv  2 \pmod 4$, $k \ge 6$ and
sufficiently large $N$ we show that 
there are asymptotically
$\gg N^{1/2}/(\log N)^{1/2}$
positive integers $n \leq N$ for which equation (\ref{eq})
has no solution with $z$ fixed into a certain residue class, though
there are no congruence obstructions (see Theorem \ref{k=2t}).
For odd $k\geq 3$ we show that
there are asymptotically at least $\frac{k N^{1/k}}{2\varphi(k) \log N}$ 
such exceptional positive integers 
$n \leq N$ (see Theorem \ref{kthpower}). 

Let us further mention that Hooley \cite{Hooley:2000}
investigated sums of three squares and a $k$-th power,
Friedlander and Wooley \cite{FriedlanderandWooley:2014}
sums of two squares and three biquadrates, and Wooley \cite{Wooley:2014}
sums of squares and a `micro square', in connection with a conjecture of
Linnik. In this connection we would like to add a seemingly forgotten old
reference: Theorem 7 of Rieger \cite{Rieger:1964} states  that
the number of integers $n \leq N$ which can be written as
$n=x^2+y^2+z^k$, where $z \leq F(N)$, and $F$ is a function tending
monotonically to infinity, with $F(n) \leq \sqrt{\log N}$, is
$\gg_{k,F} \frac{N\, F(N)}{\sqrt{\log N}}$, in other words, as good as it can be.

The authors are grateful
to  Tim Browning,
J\"org Br\"udern, Roger Heath-Brown, Jan-Christoph Schlage-Puchta,
Dasheng Wei and Trevor Wooley for interesting discussions or observations.

\section{Two squares and an odd $k$-th power}
\begin{theorem}{\label{kthpower}}
Let $k\geq 3$ be odd.
Let $p$ be a prime with $p\equiv 1 \bmod 4k$.
Then there are no integers $x,y,z$, positive or negative,
with $x^2+y^2+z^k=p^{k}$ and
$z\equiv 2k \bmod 4k$.
\end{theorem}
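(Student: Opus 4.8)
The plan is to reduce the statement to the classical two-squares criterion: a positive integer $n$ is a sum of two integer squares if and only if every prime $q \equiv 3 \pmod 4$ occurs to an even power in $n$. Since $k$ is odd the map $t \mapsto t^k$ is strictly increasing, so if $z \geq p$ then $p^k - z^k \leq 0$ and $x^2 + y^2 = p^k - z^k$ is impossible at once; hence I may assume $z < p$, so that $N := p^k - z^k$ is a positive integer. It then suffices to exhibit a single prime $q \equiv 3 \pmod 4$ with $v_q(N)$ odd, where $v_q$ denotes the $q$-adic valuation.

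First I would exploit the factorization
\[
p^k - z^k = (p - z)\,(p^{k-1} + p^{k-2}z + \cdots + z^{k-1}) =: A \cdot B.
\]
Reducing modulo $4$ and using that $k$ is odd together with $p \equiv 1 \pmod{4k}$ and $z \equiv 2k \pmod{4k}$ gives $A = p - z \equiv 1 - 2k \equiv -1 \equiv 3 \pmod 4$. As $A$ is a positive integer congruent to $3 \pmod 4$, an odd number of its prime factors (counted with multiplicity) are $\equiv 3 \pmod 4$; in particular there is a prime $q \equiv 3 \pmod 4$ with $v_q(A)$ odd.

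The crux is then to show that this $q$ does not reappear in the cofactor $B$, so that the odd valuation survives in $N = AB$. Since $q \mid A$ we have $z \equiv p \pmod q$, whence $B \equiv \sum_{i=0}^{k-1} p^{k-1-i} p^{i} = k p^{k-1} \pmod q$, so that $v_q(B) = 0$ provided $q \nmid kp$. Here the congruence condition on $z$ does exactly the work: for any prime $\ell \mid k$ one has $z \equiv 2k \equiv 0$ and $p \equiv 1 \pmod \ell$, so $A \equiv 1 \pmod \ell$ and $\ell \nmid A$; in particular our $q$, which divides $A$, is automatically coprime to $k$. Moreover $q \equiv 3 \not\equiv 1 \equiv p \pmod 4$ forces $q \neq p$, so $q \nmid p$. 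Consequently $v_q(N) = v_q(A) + v_q(B) = v_q(A)$ is odd, $N$ is not a sum of two squares, and the theorem follows.

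I expect the main obstacle to be precisely the control of $v_q(B)$: producing a prime $q \equiv 3 \pmod 4$ with $v_q(A)$ odd is immediate from $A \equiv 3 \pmod 4$, but one must rule out that the same $q$ re-enters through $B$ and restores an even total valuation. The identity $B \equiv k p^{k-1} \pmod q$ reduces this to $q \nmid kp$, and it is exactly the hypothesis $z \equiv 2k \pmod{4k}$ — forcing $\ell \nmid A$ for every $\ell \mid k$ — that secures $q \nmid k$, while $p \equiv 1 \pmod 4$ takes care of $q \nmid p$.
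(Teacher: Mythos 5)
Your proposal is correct and follows essentially the same route as the paper: the same factorization $p^k-z^k=(p-z)B$, the same extraction of a prime $q\equiv 3 \bmod 4$ with odd multiplicity in $p-z$ from $p-z\equiv 3\bmod 4$, the same verification that $q\nmid k$ via the congruence condition on $z$, and the same key congruence $B\equiv kp^{k-1}\bmod q$ (the paper writes $kz^{k-1}$, which is equivalent since $p\equiv z\bmod q$). The only cosmetic difference is that you argue contrapositively, showing $v_q(B)=0$ so the odd valuation survives in the product, whereas the paper assumes $q\mid B$ and derives the contradiction $q\mid z$, hence $q\mid p$.
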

\begin{proof}
Assume there are solutions, then
$x^2+y^2=(p-z)(p^{k-1}+p^{k-2}z+\cdots +p z^{k-2}+z^{k-1})$.
If $z\equiv 2k \bmod 4k$, then $p-z \equiv 2k +1 \bmod 4k$.
Since $k$ is odd, $2k+1 \equiv 3 \bmod 4$.
Hence $p-z$ must contain a prime divisor $q \equiv 3 \bmod 4$
with odd multiplicity. Note that $\gcd(q,k)=1$, as
otherwise $q|k$ and $0\equiv p-z\equiv 2k+1\equiv 1 \bmod q$ gives a contradiction.

Recall that by the general classification of integers which are sums of
two squares the integer $x^2+y^2$ contains prime factors
$q \equiv 3 \bmod 4$ with
even multiplicity only. Therefore both $p-z$ and
$p^{k-1}+p^{k-2}z+\cdots +p z^{k-2}+z^{k-1}$
 are
divisible by $q$. With $p \equiv z \bmod q$ it follows that

\[ p^{k-1}+p^{k-2}z+\cdots +p z^{k-2}+z^{k-1}\equiv kz^{k-1} \equiv 0
\bmod q.\]
This implies that $q \mid z$ and hence $q\mid p$, which is impossible,
as $q=p$ would contradict $p\equiv 1 \bmod 4$.

Also note that there are no congruence obstructions
that would imply that in $x^2+y^2+z^k=p^{k}$ there are no 
solutions with $z\equiv 2k \bmod 4k$.

To see this first observe that for a fixed odd prime $q$
one can choose an integer $z \equiv 2k \bmod 4k$
such that $q$ is coprime to $p^k-z^k$; similarly, for $q=8$ just
choose $z=2k$.
For this fixed $z$ the congruence
 $x^2+y^2+z^k \equiv p^k \bmod q$ has a nonsingular
solution in $x$ and $y$ which by Hensel's lemma can be lifted to a $q$-adic or
$2$-adic solution, respectively.

\end{proof}

By the prime
number theorem in arithmetic progressions, the number of such
examples,  $p^{k} \le N$ with $p \equiv 1 \bmod 4k$, is asymptotically
\[  \frac{1}{\varphi(4k)} \int_2^{N^{1/k }} \frac{dt}{\log t} 
\sim \frac{k}{2 \varphi(k)}\frac{N^{1/k}}{\log N}.
\]

\section{Two squares and an even $k$-th-power}

\subsection{Two squares and a $k$-th power, $k \equiv 0 \bmod 4$}

\begin{theorem}{\label{biquadrate}}
Suppose that $4\mid k$ and let $p$ be a prime with $p \equiv 7 \bmod 8$.
Let $n\equiv 1 \bmod 8$ be either $1$ or
consist of prime factors congruent to $1 \bmod 4$ only, and assume
that $n<p$.
Then there are no positive integers $x,y,z$ with $x^2+y^2+z^k=(np)^2$.
\end{theorem}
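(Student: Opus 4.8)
The plan is to mimic the factorisation argument of Theorem \ref{kthpower}, exploiting that $4\mid k$ makes $z^k$ a perfect square with $z^{k/2}$ itself a square. Writing the equation as
\[
  x^2+y^2=(np)^2-z^k=(np-z^{k/2})(np+z^{k/2}),
\]
I set $A=np-z^{k/2}$ and $B=np+z^{k/2}$ and aim to locate a prime $q\equiv 3\bmod 4$ dividing $AB$ to an odd power. Since $AB=x^2+y^2$ is a sum of two squares, the classification of such integers forbids this, yielding the contradiction.

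First I would record two elementary consequences of positivity and of the hypothesis $n<p$. Because $x,y\geq 1$ we have $z^{k/2}<np$, so $A>0$. Moreover $p\nmid z$: otherwise $z\geq p$ would force $z^{k/2}\geq p^{k/2}\geq p^2>np$ (using $k\geq 4$ and $n<p$), contradicting $z^{k/2}<np$. In particular $p\nmid A$, since $A\equiv -z^{k/2}\not\equiv 0\bmod p$, and likewise $p\nmid B$.

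The heart of the argument is a residue computation modulo $8$, using $np\equiv 7\bmod 8$ (from $n\equiv 1$ and $p\equiv 7\bmod 8$). If $z$ is even then $4\mid z^{k/2}$, so $A\equiv 3\bmod 4$; if $z$ is odd then $z^{k/2}\equiv 1\bmod 8$, so $A\equiv 6\bmod 8$ and the odd part $A/2\equiv 3\bmod 4$. Either way the odd part of $A$ is $\equiv 3\bmod 4$, hence, not being a sum of two squares, it carries some prime $q\equiv 3\bmod 4$ to an odd power. By the previous step $q\neq p$, and $q\nmid n$ since every prime factor of $n$ is $\equiv 1\bmod 4$.

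The step I expect to need the most care is showing that this odd exponent survives into the product, i.e.\ that $q\nmid B$; this is exactly where the remaining hypotheses are consumed. For this I would bound $\gcd(A,B)$, which divides $A+B=2np$ and $B-A=2z^{k/2}$. For the odd prime $q$ this gives $\min(v_q(A),v_q(B))\leq v_q(np)=0$, the final equality because $q\neq p$ and $q\nmid n$; since $v_q(A)>0$ we conclude $v_q(B)=0$. Hence $v_q(AB)=v_q(A)$ is odd, contradicting that $AB$ is a sum of two squares. A routine check then confirms that $4\mid k$ (to make $z^{k/2}$ an odd square $\equiv 1\bmod 8$ when $z$ is odd), $p\equiv 7\bmod 8$ with $n\equiv 1\bmod 8$ (to force the residue $3\bmod 4$), the shape of $n$ (to kill $q$ in $\gcd(A,B)$), and $n<p$ (to exclude $p\mid z$) are each used precisely once, which is a reassuring sanity check on the statement.
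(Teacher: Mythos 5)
Your proposal is correct and follows essentially the same route as the paper: the same factorisation $(np-z^{k/2})(np+z^{k/2})$, the same mod-$8$ analysis producing a prime $q\equiv 3\bmod 4$ dividing $np-z^{k/2}$ to odd multiplicity, and the same use of the gcd of the two factors dividing $2np$ and $2z^{k/2}$. The only difference is organisational: you exclude $q=p$ up front via the size bound $z^{k/2}<np$ together with $n<p$, landing the contradiction on the odd valuation $v_q(x^2+y^2)$, whereas the paper lets $q$ divide both factors, deduces $q=p$ and $q\mid z$, and contradicts via the magnitude inequality $z^k\geq p^4>(np)^2$ --- the same ingredients in reversed order.
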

\begin{proof}
Let $k=2t$, where $t$ is even.
Assume there are solutions, then $x^2+y^2=(np-z^t)(np+z^t)$.
If $z$ is even, then $np-z^t \equiv 3 \bmod 4$. If $z$ is odd, then
$np-z^t\equiv 6 \bmod 8$.
In both cases $np-z^t$ must contain a prime divisor 
$q \equiv 3 \bmod 4$
with odd multiplicity.
Therefore, as in the proof of Theorem \ref{kthpower},
we conclude that both $np-z^t$ and $np+z^t$ are
divisible by $q$.
Hence their sum $2np$ and their difference $-2z^t$ are also divisible 
by $q$.
Since $2n\not\equiv 0 \bmod q$, and since
$p$ is prime: $p=q$, and since $z \neq 0$: $q$ divides $z$.
But this gives a contradiction:
\[x^2+y^2+z^k > q^k\geq q^4>(nq)^2=(np)^2.\]

\end{proof}

Let us give an estimate of the number of integers $np\leq N$, 
with $n\equiv 1\bmod 8$ consisting of prime
factors $1 \bmod 4$ only, and $n<p$.

Recall that by a theorem of Landau the number of integers $n \leq N$
consisting of  prime factors $1 \bmod 4$ only is of order of magnitude
$\frac{N}{(\log N)^{1/2}}$, and about one half of these numbers satisfy the
congruence restriction
 $n \equiv 1 \bmod 8$. 

Let $f: \N\rightarrow \{0,1\}$ be the characteristic 
function of these integers $n$, i.e. we put
$f(n)=1$, if $n\equiv 1\bmod 8$, and all prime factors
  of $n$ are $1\bmod 4$; otherwise we put $f(n)=0$. Now
\[
\sum_{np\leq N, n<p} f(n) = \sum_{n \leq N/p, n<p}f(n)
\gg \sum_{ N^{1/2}\leq p \leq N^{3/4}}\frac{N/p}{(\log (N/p))^{1/2}}\gg
\frac{N}{(\log N)^{1/2}},
\]
where we used that
\[\sum_{ N^{1/2}\leq p \leq N^{3/4}}\frac{1}{p}=
\log \log N^{3/4}-\log \log N^{1/2}+o(1)=\log (3/2)+o(1)\gg 1.\]
(In view of Landau's theorem this order is the right order of magnitude.)
Hence the number of exceptional $(np)^2 \le N$
provided by Theorem \ref{biquadrate} is
$\gg \frac{N^{1/2}}{(\log N)^{1/2}}$.

Note that as for Theorem \ref{kthpower}
one can check that there are no congruence obstructions for the representation of $(np)^2$.

\newpage

\subsection{Two squares and a $k$-th power, $k \equiv 2 \bmod 4$}
\begin{theorem}{\label{k=2t}}
Suppose that $k\equiv 2 \bmod 4$, $k \geq 6$ and 
let $p$ be a prime with $p \equiv 7 \bmod 8$.
Let $n<p$ be an integer either $1$ or consisting of prime factors congruent to 
$1 \bmod 4$ only, and
$n\equiv 1 \bmod 8$.
Then there are no positive integers $x,y,z$, where $2\mid z$,  
with $x^2+y^2+z^k=(np)^2$.
\end{theorem}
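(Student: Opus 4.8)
The plan is to follow the template of the proof of Theorem \ref{biquadrate}, exploiting that $k \equiv 2 \pmod 4$ lets me write $k = 2t$ with $t$ \emph{odd}, and that $k \geq 6$ forces $t \geq 3$. Assuming a solution exists, I would factor
\[
x^2 + y^2 = (np)^2 - z^k = (np - z^t)(np + z^t),
\]
so that the obstruction comes, as before, from forcing a prime $q \equiv 3 \pmod 4$ to divide the sum of two squares on the left to odd multiplicity.

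First I would pin down the relevant residue class. Since $n \equiv 1 \pmod 8$ and $p \equiv 7 \pmod 8$, we have $np \equiv 7 \pmod 8$. The hypothesis $2 \mid z$ together with $t \geq 2$ gives $z^t \equiv 0 \pmod 4$, whence $np - z^t \equiv 7 \equiv 3 \pmod 4$. By the two-squares classification, an integer $\equiv 3 \pmod 4$ is odd and must carry some prime factor $q \equiv 3 \pmod 4$ to an odd power, so $q \mid np - z^t$ with odd multiplicity.

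Next, exactly as in Theorem \ref{biquadrate}, since $x^2 + y^2$ is a sum of two squares, $q$ occurs in it to even multiplicity, which forces $q$ to divide the other factor $np + z^t$ as well. Then $q$ divides both the sum $2np$ and the difference $2z^t$; as $q$ is odd this yields $q \mid np$ and $q \mid z$. Because every prime factor of $n$ is $\equiv 1 \pmod 4$ (or $n = 1$) while $q \equiv 3 \pmod 4$, we get $q \nmid n$, so $q \mid p$ and hence $q = p$. I would then close with the same size argument: from $q = p \mid z$ and $z > 0$ we get $z \geq p$, so $z^k \geq p^k \geq p^6$, while $n < p$ gives $(np)^2 = n^2 p^2 < p^4 \leq p^k$; therefore $x^2 + y^2 + z^k \geq z^k > (np)^2$, contradicting the supposed equality.

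I expect the only genuinely delicate point to be the residue computation: the conclusion $np - z^t \equiv 3 \pmod 4$ relies simultaneously on $t \geq 2$ (which is where $k \geq 6$ enters, since $k \equiv 2 \pmod 4$ makes $t$ odd and hence $t \geq 3$) and on the parity restriction $2 \mid z$. This is precisely the role of those two hypotheses, and it explains why the case $k = 2$ (giving $t = 1$) must be excluded. Everything else is a direct transcription of the argument for Theorem \ref{biquadrate}, and as there one should finally remark that no congruence obstruction forbids the representation of $(np)^2$.
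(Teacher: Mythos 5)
Your proposal is correct and is essentially the paper's own argument: the paper disposes of Theorem~\ref{k=2t} with the remark that the proof is ``almost verbatim'' that of Theorem~\ref{biquadrate}, and your write-up supplies exactly the details that the adaptation requires ($t$ odd with $t \geq 3$, the hypothesis $2 \mid z$ forcing $np - z^t \equiv 3 \pmod 4$ in place of the two-case analysis for even $t$, and the size contradiction $z^k \geq p^k > p^4 > (np)^2$). Your side remark on why $k=2$ is excluded is a sound bonus observation, so there is nothing to correct.
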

\begin{proof}
The proof is almost verbatim as above.
\end{proof}

Let us remark that as above one shows that the number of exceptional
$(np)^2 \le N$
provided by Theorem \ref{k=2t} is $\gg \frac{N^{1/2}}{(\log N)^{1/2}}$.
Further note that in a similar way as for Theorem \ref{kthpower}
one observes that there are no congruence obstructions for the
requested representation of $(np)^2$.

\section{Afterthought}
A major part of the paper was actually written around 2007/8.
We had shown earlier versions of 
this paper to several colleagues, hoping
that someone would write a more detailed explanation based on tools from
arithmetic geometry such as the 
Brauer-Manin obstruction. Indeed, in this way the question has come to Fabian
Gundlach \cite{Gundlach:2013} who was very recently able to give a 
detailed and general account. 

As Gundlach refers to our work as an unpublished manuscript, and as our proofs
use a much less sophisticated language, it seems desirable to have this paper
in final form. The main part of this paper is a slightly improved version, 
compared to the manuscript Gundlach referred to. In particular,
the version cited by Gundlach \cite{Gundlach:2013} had in Theorem \ref{kthpower} the same 
statement and proof with $p^{2k}$ rather than $p^k$. Also in 
Theorem \ref{biquadrate} and \ref{k=2t}
we now have an additional 
factor $n$, thanks to an observation of J.C. Schlage-Puchta.
In other words, the
current version gives slightly stronger results.

\ \\
\noindent 2010 Mathematics Subject Classification:\\
Primary: 11E25\\
\ \\
Secondary: 11P05\\

\keywords{Hasse principle, strong approximation, ternary additive problems, 
Waring type problems}

\end{document}